\newtheorem{thm}{Theorem}%[section]
\newtheorem{prop}[thm]{Proposition}
\theoremstyle{definition}
\theoremstyle{remark}
\newcommand{\RR}{\mathbb R}
\newcommand{\rk}{\mathrm{rank}\, }
\newcommand{\mfk}{\mathfrak{k}}
\newcommand{\mfh}{\mathfrak{h}}
\newcommand{\mfu}{\mathfrak{u}}
\begin{document}
\address[O.~Goertsches]{Fachbereich Mathematik und Informatik der Philipps-Universit\"at 
\newline Marburg\\
Germany}
\address[A.-L.~Mare]{Department of Mathematics and Statistics\\ University of Regina\\ Canada}

\email[]{goertsch@mathematik.uni-marburg.de}

\email[]{mareal@math.uregina.ca}

\title[Topological cohomogeneity-one actions]{Equivariant cohomology of  cohomogeneity-one actions:
the topological case}

\author{Oliver Goertsches}
\author{Augustin-Liviu Mare}

\begin{abstract}
We show that for any cohomogeneity-one continuous action of a compact connected Lie group $G$ on a closed topological  manifold the equivariant cohomology equipped with its canonical 
$H^*(BG)$-module structure is  Cohen-Macaulay.  The proof relies on the structure theorem for these actions recently obtained by  Galaz-Garc\'{\i}a and Zarei.  We generalize in this way our previous result
concerning smooth actions.  
\end{abstract}  
\maketitle
%\tableofcontents

Let $M$ be a closed topological manifold, $G$ a compact connected Lie group, and 
$G\times M \to M$ a continuous action of cohomogeneity one, i.e., such that the orbit space
$M/G$ is one-dimensional. A complete description of such group actions has been obtained
only recently by Galaz-Garc\'{\i}a and Zarei in \cite{GZ}. As a consequence, 
they were able to identify within the setup above the actions which do not fit into the smooth category. We recall that smooth cohomogeneity-one actions  had been previously dealt with by  Mostert in \cite{Mo}.
    
Our main goal here is to prove the following result. (The coefficient ring for cohomology is  always  $\RR$.)

\begin{thm}\label{main}
Let $M$ and $G$ be  as above, such that the action has cohomogeneity equal to 1.
Then the equivariant cohomology group $H^*_G(M)$ equipped with its canonical structure of a module over $H^*(BG)$ is Cohen-Macaulay. 
\end{thm}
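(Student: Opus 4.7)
My plan is to exploit the Galaz-Gar\'{\i}a--Zarei structure theorem to reduce the topological case to the same Mayer--Vietoris/commutative algebra argument used in the smooth setting. The structure theorem says $M/G$ is either $S^1$ or a closed interval $[0,1]$; in the interval case $M$ is the $G$-equivariant union of two ``tubes'' $X_\pm$ about the two singular orbits $G/K^\pm$, glued along a principal orbit $G/K$, just as in the Mostert picture. Although the tubes are not honest linear disk bundles when the action fails to be smoothable, the structure theorem still provides a cone-like fibration of each tube over its singular orbit. The first step is therefore to record, using this theorem, that each $X_\pm$ is $G$-equivariantly homotopy equivalent to $G/K^\pm$ and that the thickened overlap $X_-\cap X_+$ is $G$-equivariantly homotopy equivalent to $G/K$, with the inclusions $X_-\cap X_+ \hookrightarrow X_\pm$ inducing (at the orbit level) the natural projections $G/K \to G/K^\pm$.

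Assuming this geometric input, the second step is to write down the equivariant Mayer--Vietoris sequence
\begin{equation*}
\cdots \to H^{i}_G(M) \to H^{i}_G(X_-) \oplus H^{i}_G(X_+) \to H^{i}_G(X_-\cap X_+) \to H^{i+1}_G(M) \to \cdots
\end{equation*}
of $H^*(BG)$-modules, which by the previous step becomes
\begin{equation*}
\cdots \to H^{i}_G(M) \to H^{i}(BK^-) \oplus H^{i}(BK^+) \to H^{i}(BK) \to H^{i+1}_G(M) \to \cdots.
\end{equation*}
Since $K\subset K^\pm$ with $\mathrm{rank}(K)\le \mathrm{rank}(K^\pm)$, each $H^*(BK^\pm)$ and $H^*(BK)$ is a Cohen--Macaulay $H^*(BG)$-module of depths equal to $\mathrm{rank}(K^\pm)$ and $\mathrm{rank}(K)$ respectively. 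The third step is then the standard depth lemma argument (exactly as in our previous smooth-case paper): one reads off from the long exact sequence that $H^*_G(M)$ has depth at least $\mathrm{rank}(K)$, and one checks the matching Krull dimension bound using the fact that $\mathrm{rank}(K)$ is the minimum isotropy rank.

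The case $M/G=S^1$ is handled in parallel: either split $S^1$ into two arcs and run the same Mayer--Vietoris (now both tubes $X_\pm$ retract to principal orbits $G/K$), or use the equivariant Wang sequence coming from the fibration $M_G\to S^1$ with fibre $BK$. Either way $H^*_G(M)$ sits in a short exact sequence involving two copies of the Cohen--Macaulay module $H^*(BK)$ and is therefore Cohen--Macaulay.

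The main obstacle I expect is the purely geometric first step: transferring the smooth tubular-neighbourhood picture to the topological category. In the smooth setting, equivariant retraction of $X_\pm$ onto $G/K^\pm$ follows immediately from the slice theorem; in the topological case one must extract the analogous equivariant strong deformation retraction from the Galaz-Gar\'{\i}a--Zarei classification, including for those actions that do not admit any smooth model. Once the tubes are shown to collapse $G$-equivariantly onto the singular orbits, the algebraic portion of the proof is essentially unchanged from the smooth case.
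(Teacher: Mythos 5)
Your outline (structure theorem, Mayer--Vietoris over the two tubes, commutative algebra) is indeed the paper's skeleton, but you have misplaced the difficulty and your algebraic step does not close. The geometric input you flag as the ``main obstacle'' is essentially free: in the interval case the Galaz-Garc\'{\i}a--Zarei structure theorem presents $M$ as $G\times_{K^-}C(K^-/H)\cup_{G/H}G\times_{K^+}C(K^+/H)$, and the cone structure gives the equivariant deformation retractions of the tubes onto $G/K^\pm$ immediately. The genuinely new issue in the topological category is that $K^\pm/H$ may be the Poincar\'e homology sphere ${\bf P}^3$ rather than a sphere, and your proposal never engages with this. The paper's key new lemma (its Proposition 2) is that for a transitive action of a compact Lie group $K$ on ${\bf P}^3$ with isotropy $H$ one has $\rk H=\rk K-1$ and the restriction $H^*(BK)\to H^*(BH)$ is surjective; this rests on Bredon's classification of homogeneous cohomology spheres plus a Lie-algebra argument splitting $\mfk=\mfh\oplus\mfu$. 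Without such a statement you cannot show that the map $H^*(BK^-)\oplus H^*(BK^+)\to H^*(BH)$ in your Mayer--Vietoris sequence is surjective, and without that surjectivity the long exact sequence does not split into the short exact sequence on which the whole depth computation is based.

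Your third step is also incorrect as stated. The Krull dimension of $H^*_G(M)$ over $H^*(BG)$ equals the \emph{maximal} isotropy rank, not the minimal one, so proving $\depth H^*_G(M)\ge\rk H$ (the principal, hence minimal, rank) does not give Cohen--Macaulayness when $\rk H<\max\{\rk K^-,\rk K^+\}$. The actual argument requires a case distinction: if $\rk H=\rk K^-=\rk K^+$ one cannot expect the restriction map to be surjective at all (for an even-dimensional sphere $K^\pm/H$ it is injective of rank two), and the paper instead invokes the general fact that actions with all isotropy groups of equal rank are Cohen--Macaulay (Goertsches--Rollenske, Franz); if the ranks differ, one splits the sequence as above and then argues separately according to whether $\rk K^+$ equals $\rk K^-$ or is one less, the latter case using that $H^*(BH)$ is a free $H^*(BK^+)$-module of rank two. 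A single uniform ``depth lemma'' pass over the long exact sequence does not reproduce this.
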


We  recall that $H^*(BG)$ is a polynomial ring in several variables. 
Furthermore, a non-zero module over any such ring is Cohen-Macaulay if it is finitely
generated and its depth is equal 
to its Krull dimension (see, e.g., \cite[Sect.~1.5]{Bruns-Herzog}).

In the special case when the $G$-action is smooth, the theorem above has been proved in \cite{Go-Ma}. 
We will try to keep the present paper self-contained; however, for the sake of brevity, sometimes we will prefer to simply invoke results already proved in the aforementioned work. 
%We start with a technical lemma.

%\begin{lem} \label{lemasubg} If $G$ is a compact, possibly non-connected Lie group and $G'\subset G$ a normal subgroup, then  $H^*(BG) \simeq H^*(B(G/G')) \otimes H^*(BG')$ by a canonical  isomorphism of vector spaces.  \end{lem}

%\begin{proof}  Write $BG =(EG \times E(G/G'))/G$ where the group action involved here is diagonal, $G$ acting on $E(G/G')$ via the canonical projection $G\to G/G'$. The projection onto the second factor gives rise to a fibration  $BG' \to BG \to B(G/G').$ Since all three components of the fibration have vanishing odd-dimensional cohomology, by an elementary spectral sequence argument one deduces the required isomorphism. \end{proof}

%Thanks to the previous lemma,  we do not lose any generality in regards to our main result if we restrict ourselves to effective actions.

The structure theorem of \cite{GZ, Mo} says as follows. 
Let $M$ and $G$ as indicated above. Then the orbit space $M/G$ is homeomorphic to either
the circle $S^1$  or  the interval $[0,1]$. 
In the first case there is just one orbit type, say $(H)$, and $M$ is a $G/H$-bundle over $S^1$.
If $M/G=[0,1]$   
 then $M$ is determined  by a {\it group diagram}.
This  is of the form $(G, H, K^-, K^+)$ where $H,K^-,K^+$ are
closed subgroups of $G$ such that $H\subset K^{\pm}$ and  
each of $K^+/H$ and $K^-/H$ is homeomorphic to a sphere or to the Poincar\'e homology sphere
${\bf P}^3$.   The cohomogeneity one manifold encoded by this quadruple is $$ M = G \times_{K^-}C(K^-/H) \cup_{G/H}
G \times_{K^+} C(K^+/H),$$ where $C(K^\pm/H) =(K^\pm/H \times [0,1])/(K^\pm/H \times \{1\} =*)
$ are the cones over $K^\pm/H$. The $G$-action has three isotropy types, which are represented by
$H$ (the principal one), $K^-$, and $K^+$, respectively. 
% The projection $G \times_{K^\pm}C(K^\pm/H)\to G/K^{\pm}$ is a bundle whose fiber is the cone $C(K^{\pm}/H)$. Thus  $G \times_{K^\pm}(K^\pm/H\times (0,1])$ is open in $G \times_{K^\pm}C(K^\pm/H)$. The two subspaces $G \times_{K^\pm}C(K^\pm/H)$ of $M$ have in common $G \times_{K^\pm}(K^\pm/H\times \{0\})=G/H$. 

The following result will be needed later; it is similar in spirit to \cite[Proposition 3.1]{Go-Ma}. 

\begin{prop}\label{prorank} Let $K$ be a compact Lie group, possibly non-connected, which acts transitively on  the Poincar\'e homology sphere ${\bf P}^3$ and let $H\subset K$ be an isotropy subgroup.
Then $\rk H =\rk K -1$ and the canonical homomorphism 
$H^*(BK)\to H^*(BH)$ is surjective.
\end{prop}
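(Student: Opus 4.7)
The plan is to reduce the statement to the analogous result for transitive actions on the $3$-sphere, namely \cite[Proposition 3.1]{Go-Ma}, by lifting the $K$-action along the universal covering $\pi\colon S^3\to \mathbf{P}^3$, whose deck transformation group is the binary icosahedral group $\Gamma$ of order $120$. Concretely, I would consider
\[
\hat K = \{(k,\phi) : k\in K,\ \phi\colon S^3\to S^3 \text{ a homeomorphism satisfying } \pi\circ\phi = k\circ\pi\},
\]
with componentwise composition. The projection $\hat K\to K$ has kernel $\Gamma$ and is a $120$-fold covering; this endows $\hat K$ with the structure of a compact Lie group acting on $S^3$ by $(k,\phi)\cdot\tilde{x} = \phi(\tilde{x})$. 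The identity component $K_0\subseteq K$ acts transitively on $\mathbf{P}^3$: since $K$ is transitive and $K/K_0$ is finite, the orbit space $\mathbf{P}^3/K_0$ is finite, hence $K_0$-orbits are open and so, being closed as well, equal the connected manifold $\mathbf{P}^3$. Lifting this gives three-dimensional $\hat K_0$-orbits in $S^3$, so $\hat K$ itself acts transitively on $S^3$.

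Now apply \cite[Proposition 3.1]{Go-Ma} to the transitive $\hat K$-action on $S^3$: picking $\tilde{*}\in\pi^{-1}(*)$ and letting $\hat H$ denote its isotropy in $\hat K$, one obtains that $\rk \hat H = \rk \hat K - 1$ and that $H^*(B\hat K)\to H^*(B\hat H)$ is surjective. It then remains to transfer these to $K$ and $H$. Since $\Gamma$ acts freely on $S^3$ we have $\hat H\cap\Gamma = \{1\}$; and since $\hat K$ acts transitively on $S^3$, the preimage of $H$ in $\hat K$ decomposes as $\Gamma\cdot\hat H$. Therefore the projection $\hat K\to K$ restricts to a Lie group isomorphism $\hat H\cong H$, which yields $\rk H = \rk \hat H = \rk \hat K - 1 = \rk K - 1$. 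For the cohomology statement, the fibration $B\Gamma\to B\hat K\to BK$ arising from $1\to\Gamma\to\hat K\to K\to 1$ has fiber of trivial real cohomology (as $\Gamma$ is finite), so $H^*(BK;\RR)\to H^*(B\hat K;\RR)$ is an isomorphism; together with the identification $H^*(B\hat H;\RR)\cong H^*(BH;\RR)$ coming from $\hat H\cong H$, this transports the surjectivity of $H^*(B\hat K)\to H^*(B\hat H)$ to that of $H^*(BK)\to H^*(BH)$.

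The main technical point in this plan is the set-up of $\hat K$ in the purely topological framework: I will have to verify that the lifts assemble into a topological group carrying a natural Lie group structure making $\hat K\to K$ a covering map, that the resulting action on $S^3$ is continuous, and that the identification $\hat H\cong H$ is an isomorphism of Lie groups. Once these are in place, the proposition follows from the sphere case plus the standard Leray-spectral-sequence comparison of classifying spaces under finite normal subgroups.
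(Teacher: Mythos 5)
Your argument is correct in substance, but it is a genuinely different route from the one taken in the paper. The paper argues directly on $\mathbf{P}^3$: if the action is effective it invokes Bredon's classification of homogeneous cohomology spheres to conclude that $K$ has rank $1$ and $H$ is finite (whence both claims are immediate); if the kernel $K'$ is nontrivial it gets the rank identity from additivity of rank in the extension $K'\to K\to K/K'$, and proves surjectivity by an explicit invariant-theoretic computation, extending an $H$-invariant polynomial on $\mathfrak h$ by zero along the orthogonal ideal complement $\mathfrak u\cong\mathfrak{su}(2)$ in $\mathfrak k=\mathfrak h\oplus\mathfrak u$ and checking $K$-invariance via $[\mathfrak u,\mathfrak h]=0$. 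You instead pass to the universal cover $S^3\to\mathbf P^3$, build the group $\hat K$ of lifts as a $120$-fold covering group with kernel the binary icosahedral group, and reduce everything to the sphere case of \cite[Prop.~3.1]{Go-Ma}, transporting the conclusions back via $\hat H\cong H$ (which is right: $\hat H\cap\Gamma=1$ because deck transformations act freely, and $p(\hat H)=H$) and via $H^*(BK;\RR)\cong H^*(B\hat K;\RR)$ ($\Gamma$ finite). What your approach buys is conceptual economy --- no appeal to Bredon and no invariant-polynomial computation, just the already-established sphere statement; what it costs is the covering-space bookkeeping you flag yourself (that $\hat K$ is a compact Lie group, that $\hat K\to K$ is a covering homomorphism, that the lifted action is continuous and transitive --- for the last point note that every $\hat K$-orbit surjects onto $\mathbf P^3$ under the local homeomorphism $\pi$, hence is $3$-dimensional, hence open, and connectedness of $S^3$ finishes it). Two small points to make explicit when writing this up: \cite[Prop.~3.1]{Go-Ma} must be applicable to the possibly non-connected group $\hat K$ (it is, as used elsewhere in the paper), and $\hat K/\hat H$ is a priori only homeomorphic to $S^3$, which in dimension $3$ suffices to place you in the smooth sphere case.
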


\begin{proof} 
If the $K$-action is effective, then, by \cite{Br}, $K$ is a rank 1 Lie group and $H$ is a finite group, hence the ranks satisfy the required equation
 (note that in \cite{Br} it is assumed that $K$ is connected: however, if this not the case, observe that the action of the identity    component of $K$ on ${\bf P}^3$ is transitive as well). Hence both claims in the proposition hold true in this particular situation. 
 
Let us now assume that the kernel of the action, call it $K'$, is a non-trivial (normal) subgroup of  $K$.
 The desired conclusion about the ranks follows by writing  \newline $K/H = (K/K')/(H/K')$ and noticing that $\rk K = \rk (K/K') + \rk K'$ and $\rk H = \rk (H/K') + \rk K'$,
see, e.g., \cite[Ch.~9, Prop.~2 (c)]{Bou}. 
It  remains to justify the surjectivity claim. The method we use is inspired by the proof of Prop.~3.1, Case 2, in \cite{Go-Ma}. 
 The two cohomology rings involved in the statement can be identified with the
invariant rings $S(\mfk^*)^K$ and $S(\mfh^*)^H$, where $\mfk$ and $\mfh$ are the Lie algebras of $H$ and $K$ respectively,
and $S(\mfk^*)$, $S(\mfh^*)$ the symmetric algebras of their duals, see \cite[p.~311]{Mi-St}.
 We need to show that the map $S(\mfk^*)^K\to S(\mfh^*)^H$ induced by restriction from $\mfk$ to $\mfh$ is surjective.
  Start by noticing that $\mfh$ is also the Lie algebra of $K'$, since $H/K'$ is a finite group.  
  Thus $\mfh$ is an ideal in $\mfk$. Relative to an ${\rm Ad}K$-invariant inner product we consider
  the orthogonal decomposition $\mfk =\mfh \oplus \mfu$ which expresses $\mfk$ as  a direct sum of Lie algebras
  (the complement $\mfu$ being isomorphic to ${\mathfrak s}{\mathfrak u}(2)$).      
  Let $f:\mfh \to {\mathbb R}$ be an arbitrary $H$-invariant polynomial function. Consider $g: \mfk \to {\mathbb R}$,
  $g(X+Y):=f(X)$, for all $X\in \mfh$ and $Y\in \mfu$. To show that $g$ is $K$-invariant, we first
  point out that $K$ is generated as a group by $H$ and the identity component $K_0$ of $K$ (the exact homotopy sequence
  of the fibration $H \to K \to {\bf P}^3$ shows that any connected component of $K$ contains at least one component of $H$).
  Both $H$ and $K_0$ leave $\mfh$ invariant and consequently preserve the splitting $\mfk=\mfh \oplus \mfu$.
  Once we show that $f$ is $K_0$-invariant the proof will be finished. But this 
  follows from the fact that $[\mfu, \mfh]=0$. 
\end{proof}

We are now in a position to prove the main result.

\noindent {\it Proof of Theorem \ref{main}.} 
If $M/G=S^1$ then, as already mentioned, all isotropies are conjugate to each other. The $G$-action on $M$ is Cohen-Macaulay by \cite[Corollary 4.3]{Go-Ro}, see also \cite[Corollary 1.2]{Fr}. 
 The same argument works also in the case when $M/G = [0,1]$ and $\rk H = \rk K^-=\rk K^+$.
Let us now assume that $M/G=[0,1]$ and $\rk H\le \rk K^+\le \rk K^-$ but  
$\rk H < \rk K^-$.   
Denote $b:=\rk K^-$. By \cite[Proposition 3.1]{Go-Ma} and Proposition \ref{prorank} above, $\rk H = b-1$.   
Consider the open $G$-invariant covering $M = U \cup V$ with
$U := G \times_{K^-}C(K^-/H) \cup_{G/H}
G \times_{K^+} (K^+/H\times [0, \epsilon))$ and 
$V :=G \times_{K^-}(K^-/H\times [0, \epsilon)) \cup_{G/H}
G \times_{K^+} C(K^+/H)$ 
where $\epsilon>0$ is small. Note that $U$ and $V$ contain $G/K^-$ and $G/K^+$ respectively
as $G$-equivariant deformation retracts. Thus the corresponding Mayer-Vietoris sequence is
$$\cdots \longrightarrow H^*_G(M) \longrightarrow H^*_G(G/K^-) \oplus H^*_G(G/K^+)
\longrightarrow H^*_G(G/H) \longrightarrow \cdots .$$
By \cite[Proposition 3.1]{Go-Ma} and Proposition \ref{prorank}, the last map in the sequence above 
is surjective, hence the sequence splits:
\begin{equation}\label{seq}0 \longrightarrow H^*_G(M) \longrightarrow H^*(BK^-) \oplus H^*(BK^+)
\longrightarrow H^*(BH) \longrightarrow 0 .\end{equation} 
If both $K^-/H$ and $K^+/H$ are spheres, the proof in \cite[Sect.~4]{Go-Ma} goes through without any modification. Assume now that at least one of $K^-/H$ and $K^+/H$ is homeomorphic to
${\bf P}^3$.
Recall that $b-1\le \rk K^+\le b$.

\noindent{\it Case 1:} $\rk K^+=b$. We use the same argument as in \cite[Sect.~4, Case 1]{Go-Ma}. 
 
\noindent{\it Case 2:} $\rk K^+=b-1$. By Proposition \ref{prorank}, $K^+/H$ is is a sphere. Thus this time we  simply need to notice that everything in \cite[Sect.~4, Case 2]{Go-Ma} remains valid in this new setup.
\hfill $\square$
 
 As a byproduct we have proved that if $M/G$ is a closed interval and  \newline  
 $\rk H <{\rm max} \{\rk K^-,\rk K^+\}$ then 
$H^*_G(M)$ is isomorphic to the kernel of 
$$H^*(BK^-) \oplus H^*(BK^+) \to H^*(BH), (f,g)\mapsto \pi_1(f) -\pi_2(g),$$
where $\pi_1$ and $\pi_2$ are the obvious maps. 

In general, the Krull dimension of $H^*_G(M)$ over the ring $H^*(BG)$
is equal to the maximal rank of the isotropy subgroups. The action is equivariantly formal, i.e., the  module mentioned above is free, if and only if there is at least one isotropy subgroup of the same rank as $G$. To illustrate this principle, recall that the main difference between 
topological and smooth cohomogeneity-one actions is that only in the former case one can have 
$K^-/H={\bf P}^3$ or $K^{+}/H={\bf P}^3$; now, such an action can be equivariantly formal only if
$M$ is even dimensional (otherwise, by \cite[Sect.~5.2]{Go-Ma}, all isotropies would have maximal rank).    
A concrete situation is analyzed below.

 Consider the (non-smoothable) cohomogeneity-one action described in \cite[Example 2.3]{GZ}.
 The group diagram is  $(S^3\times {\rm SO}(n+1), I^*\times {\rm SO}(n),I^*\times {\rm  SO}(n+1),$ \newline $ S^3\times {\rm SO}(n))$,
 where   $I^*$  is the binary icosahedral group; the manifold acted on is ${\bf P}^3 * S^n$.
 Identify $H^*(BS^3)=\RR[u]$, where $\deg u =4$. 
 The equivariant cohomology is  the kernel of  the map 
 $H^*(B{\rm SO}(n+1)) \oplus \RR[u] \otimes H^*(B{\rm SO}(n)) \to H^*(B{\rm SO}(n)),$ $(f,g) \mapsto \pi_1(f)-\pi_2(g),$
 as a module over $\RR[u] \otimes H^*(B{\rm SO}(n+1))$. If $n$ is odd then there exists 
  $f\in \ker \pi_1$, $f\neq 0$; but then $f.(0,u)=0$,  hence the module is not free, in conformity with the previous discussion.  
  If $n$ is even, then  $\pi_1$ is injective and the equivariant cohomology is isomorphic to 
  $\pi_1(H^*(B{\rm SO}(n+1))) + u\RR[u] \otimes  H^*(B{\rm SO}(n))$, which is a subring of $\RR[u] \otimes H^*(B{\rm SO}(n))$. 
  This  module over $\RR[u] \otimes H^*(B{\rm SO}(n+1))$ is now free: a basis consists of $1$ and $ue$,
  where $e\in H^*(B{\rm SO}(n))$ is such that $\{1, e\}$ is a basis of $H^*(B{\rm SO}(n))$ over $H^*(B{\rm SO}(n+1))$.

 \noindent {\bf Acknowledgement.} We wish to thank the referee for suggesting an improvement.


\begin{thebibliography}{AAA}

\bibitem{Bou} N.~Bourbaki, {\it Lie Groups and Lie Algebras: Chapters 7-9}, Springer-Verlag, Berlin, 2008

\bibitem{Br} G.~E.~Bredon, {\it On homogeneous cohomology spheres,} Ann.~Math.~{\bf 73} (1961),  556-565 

\bibitem{Bruns-Herzog} W.~Bruns and J.~Herzog, {\it Cohen-Macaulay Rings},
Cambridge Studies in Advanced Mathematics, Vol.~39,
Cambridge University Press, Cambridge 1993

\bibitem{Fr} M.~Franz, {\it Syzygies in equivariant cohomology for non-abelian Lie groups},
preprint arXiv:1409.0681 

\bibitem{GZ} F.~Galaz-Garc\'{\i}a and M.~Zarei, {\it Cohomogeneity one topological manifolds revisited},
preprint arXiv:1503.09068

\bibitem{Go-Ma} O.~Goertsches and A.-L.~Mare, {\it Equivariant cohomology of cohomogeneity one actions},
Top.~Appl.~{\bf 167} (2014), 36-52

\bibitem{Go-Ro} O.~Goertsches and S.~Rollenske, {\em Torsion in equivariant cohomology
  and Cohen-Macaulay actions}, Transform.~Groups {\bf 16} (2011),  1063-1080 

%\bibitem{GGK} V.~W.~Guillemin, V.~L.~Ginzburg, and Y.~Karshon, {\it Moment maps, cobordisms, and Hamiltonian group actions}, Mathematical Surveys and Monographs, Vol.~96,  Amer.~Math.~Soc., Providence, RI, 2002

%\bibitem{Hs} W.-Y.~Hsiang, {\it Cohomology Theory of Topological Transformation Groups}, Ergebnisse der Mathematik und ihrer Grenzgebiete, Vol.~85, Springer-Verlag, New-York, 1975

\bibitem{Mi-St} J.~W.~Milnor and J.~D.~Stasheff, {\it Characteristic Classes}, Annals of Mathematical Studies,
Vol.~76, Princeton University Press, Princeton, 1964

\bibitem{Mo} P.~S.~Mostert, {\it On a compact Lie group acting on a manifold}, Ann.~Math.~{\bf 65} (1957), 447-455 

%\bibitem{Sp} E.~H.~Spanier, {\it Algebraic Topology}, McGraw-Hill Book Co., New York, 1966 

\end{thebibliography}
\end{document}